\newcommand{\na}{\mathbb{N}}
\newcommand{\re}{\mathbb{R}}
\newcommand{\rn}{\mathbb{R}^n}
\newcommand{\norm}[2]{\|#1\|_{#2}}
\newcommand{\calE}{\mathcal{E}}
\newcommand{\calF}{\mathcal{F}}
\newtheorem{theorem}{Theorem}[section]
\theoremstyle{definition}
\newtheorem{definition}[theorem]{Definition}
\theoremstyle{remark}
\newtheorem{remark}[theorem]{Remark}
\numberwithin{equation}{section}
\begin{document}

\title[From Sobolev Inequality to Doubling]{From Sobolev Inequality to Doubling}


\author[Korobenko]{Lyudmila Korobenko}
\address{University of Calgary\\
Calgary, Alberta\\
lkoroben@ucalgary.ca}
\author[Maldonado]{Diego Maldonado}
\address{Kansas State University\\
Manhattan, Kansas\\
dmaldona@math.ksu.edu}
\author[Rios]{Cristian Rios}
\address{University of Calgary\\
Calgary, Alberta\\
crios@ucalgary.ca}
\thanks{Second author supported by the US National Science Foundation under grant DMS 1361754. Third author supported by the Natural Sciences and
Engineering Research Council of Canada.}

\subjclass[2010]{35J70, 35J60, 35B65, 46E35, 31E05, 30L99}
\keywords{Sobolev inequality, Moser iteration, subunit metric spaces, doubling condition}

\date{}

\dedicatory{}

\commby{Jeremy Tyson}

\begin{abstract} In various analytical contexts, it is proved that a weak Sobolev inequality implies a doubling property for the underlying measure.

\end{abstract}

\maketitle

\section{Introduction and main result}\label{sec:intro}

Let $d_E$ denote the usual Euclidean distance in $\rn$, that is, $d_E(x,y):=|x-y|$ for every $x, y \in \rn$. Given $y \in \rn$ and $R > 0$ let $B(y,R):=\{x \in \rn : d_E(y,x) < R\}$ denote the Euclidean ball centered at $y$ with radius $R$.

\begin{definition}
Let  $\mu$ be a Borel measure on $(\rn, d_E)$ such that $0 < \mu(B) < \infty$ for every Euclidean ball $B \subset \rn$.  Given $1\leq p < \infty$ and $1< \sigma <\infty$, we say that the triple $(\rn, d_E, \mu)$ admits a \emph{weak $(p\sigma, p)$-Sobolev inequality} with a (finite) constant $C_S > 0$ if for every Euclidean ball $B:=B(y,R) \subset \rn$ and every function $\varphi \in C_c^{0,1}(B)$ (Lipschitz and compactly supported on $B$) it holds true that
\begin{align}\label{weakEuclideanSob}
\left(\frac{1}{\mu(B)} \int_B |\varphi|^{p \sigma} d\mu \right)^{\frac{1}{p \sigma}} & \leq C_S R \left(\frac{1}{\mu(B)} \int_B |\nabla \varphi|^p d\mu \right)^{\frac{1}{p}}\\\nonumber
& + C_S \left(\frac{1}{\mu(B)} \int_B |\varphi|^p d\mu\right)^{\frac{1}{p}}.
\end{align}
\end{definition}
Our main result is

\begin{theorem}\label{thm:Sob=>doubling} Suppose that, for some $1\leq p < \infty$ and $1< \sigma <\infty$, the triple $(\rn, d_E, \mu)$ admits a weak $(p\sigma, p)$-Sobolev inequality with a constant $C_S >0$. Then, the measure $\mu$ is doubling on $(\rn, d_E)$. More precisely, there exists a constant $C_D \geq 1$, depending only on $p$, $\sigma$, and $C_S$, such that
\begin{equation}\label{mudoubling}
\mu(B(y,2R)) \leq C_D \, \mu(B(y,R)) \quad \forall y \in \rn, R > 0.
\end{equation}
\end{theorem}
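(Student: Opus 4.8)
The plan is to turn \eqref{weakEuclideanSob} into a quantitative lower bound for the $\mu$‑measure of small concentric balls, from which \eqref{mudoubling} is immediate. Since enlarging $C_S$ only weakens \eqref{weakEuclideanSob}, I may assume $C_S\ge 1$ throughout.

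\emph{Step 1: a localized estimate.} For a ball $B(z,S)$ and a sub‑ball $B(w,\rho)$ with $B(w,2\rho)\subset B(z,S)$ (the centre $w$ \emph{not} necessarily equal to $z$), I would test \eqref{weakEuclideanSob} on $B(z,S)$ against $\varphi\in C_c^{0,1}(B(z,S))$ with $\varphi\equiv 1$ on $B(w,\rho)$, $\varphi\equiv 0$ off $B(w,2\rho)$, $0\le\varphi\le 1$ and $|\nabla\varphi|\le\rho^{-1}$. Using $\int|\varphi|^{p\sigma}\,d\mu\ge\mu(B(w,\rho))$, $\int|\nabla\varphi|^{p}\,d\mu\le\rho^{-p}\mu(B(w,2\rho))$ and $\int|\varphi|^{p}\,d\mu\le\mu(B(w,2\rho))$, absorbing the lower–order term via $S/\rho\ge 2$, and raising to the power $p$, one obtains
\begin{equation}\label{loc}
\mu(B(w,\rho))^{1/\sigma}\,\mu(B(z,S))^{1-1/\sigma}\le\bigl(2C_S\,S/\rho\bigr)^{p}\,\mu(B(w,2\rho)).
\end{equation}

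\emph{Step 2: a polynomial lower bound (the crux).} The goal is constants $c=c(p,\sigma,C_S)\in(0,1]$ and $Q=Q(p,\sigma)>0$ — one expects $Q=p\sigma/(\sigma-1)$, which is the unique exponent making the two sides of \eqref{loc} with $w=z$ scale‑invariant, and the correct one already for Lebesgue measure — such that
\begin{equation}\label{poly}
\mu(B(y,r))\ge c\,(r/R)^{Q}\,\mu(B(y,R))\qquad\text{for all }y\in\rn,\ 0<r\le R.
\end{equation}
Writing $\psi(r)=\mu(B(y,r))/\mu(B(y,R))$, the concentric case of \eqref{loc} reads $\psi(2r)\ge(r/2C_SR)^{p}\psi(r)^{1/\sigma}$, and one checks that the profile $c\,(r/R)^{Q}$ is \emph{preserved} under doubling the radius in this inequality. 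The difficulty is that this self‑improvement propagates a lower bound from small radii towards large ones, whereas \eqref{poly} is known only at the top radius $r=R$ (trivially). To reverse the direction one feeds \eqref{loc} back into itself along chains of balls of comparable size and, crucially, uses the freedom to recentre: arguing by contradiction at the largest dyadic scale at which \eqref{poly} fails, one finds (applying \eqref{loc} at the few scales between $r$ and $R$) that almost all of $\mu(B(y,R))$ sits in a dyadic annulus about $y$; covering that annulus by a number of balls of radius proportional to $r$ bounded in terms of $n$ only, one of them, say $B(w,\rho)$, carries a fixed fraction of $\mu(B(y,R))$ while being off‑centre relative to $B(y,r)$; applying \eqref{loc} on $B(w,\rho)$ inside $B(y,2R)$ and chaining such comparisons back to $B(y,r)$ then forces $\mu(B(y,r))$ to be comparatively large, contradicting the assumed smallness once the constants are tracked. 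Iterating this recentre‑and‑cover dichotomy over the dyadic scales yields \eqref{poly}. Making this bootstrap work — in particular passing through all intermediate scales with uniformly controlled constants, which is precisely where the off‑centre instances of \eqref{loc} are indispensable — is the main obstacle of the argument.

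\emph{Step 3: conclusion.} Applying \eqref{poly} with $R$ replaced by $2R$ and $r=R$ gives $\mu(B(y,R))\ge c\,2^{-Q}\mu(B(y,2R))$, i.e.\ \eqref{mudoubling} holds with $C_D=c^{-1}2^{Q}$, a constant depending only on $p$, $\sigma$ and $C_S$.
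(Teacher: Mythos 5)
Your Step 1 is correct, and Step 3 would follow if Step 2 were established. But Step 2 — the polynomial lower bound \eqref{poly} — is not proved: you yourself identify the bootstrap as "the main obstacle of the argument" and only sketch a hoped-for contradiction/covering/recentering scheme without carrying it out. The sketch does not close. As you note, the concentric inequality $\psi(2r)\ge (r/2C_SR)^p\psi(r)^{1/\sigma}$ propagates lower bounds \emph{upward} in scale, while the only known information sits at the top scale $r=R$; and the proposed remedy — locating an off-centre ball $B(w,\rho)$ carrying a fixed fraction of $\mu(B(y,R))$ and "chaining such comparisons back to $B(y,r)$" — requires comparing the measure of off-centre balls of radius $\sim r$ with $\mu(B(y,r))$, which is essentially the doubling property being proved. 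Note also that \eqref{poly} is (for $r\in[R/2,R]$) \emph{equivalent} to doubling, so Step 2 carries the entire content of the theorem and cannot be left as a sketch.

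The paper's proof avoids the multi-scale bootstrap altogether, and the mechanism is worth absorbing. Fix $B=B(y,R)$, $B^*=B(y,2R)$, and cutoffs $\psi_j$ equal to $1$ on $B_{j+1}$ and supported in $B_j:=\{d(x,y)\le r_j\}$ with $r_j=(2^{-j-1}+2^{-1})R$, so that all the $B_j$ live in the fixed annulus between radius $R/2$ and $3R/4$ and satisfy $\lvert\nabla\psi_j\rvert\le 2^{j+2}/R$. Applying \eqref{weakEuclideanSob} on $B^*$ to each $\psi_j$ gives
\begin{equation*}
\left(\frac{\mu(B_{j+1})}{\mu(B^*)}\right)^{\frac{1}{p\sigma}}\le C_S\,2^{j+4}\left(\frac{\mu(B_j)}{\mu(B^*)}\right)^{\frac{1}{p}} .
\end{equation*}
Raising to the power $p/\sigma^{j-1}$ and setting $P_j:=\mu(B_j)^{1/\sigma^{j-1}}$, one iterates; since $0<\mu(B(y,R/2))\le\mu(B_j)\le\mu(B)<\infty$ for all $j$, the vanishing exponents force $P_j\to 1$, the constants $C_S^{p\sigma/\sigma^j}2^{p\sigma(j+4)/\sigma^j}$ multiply to a finite constant, and the factors $\mu(B^*)^{(1-\sigma)/\sigma^j}$ telescope to $\mu(B^*)^{-1}$, yielding $\mu(B^*)\le C_S^{p\sigma/(\sigma-1)}K_1(\sigma,p)\,\mu(B)$ directly. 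In short: the exponent $1/\sigma^j\to 0$ washes out the unknown measures $\mu(B_j)$ entirely, so no lower bound at small scales, no recentering, and no covering argument is needed. Your guessed exponent $Q=p\sigma/(\sigma-1)$ does reappear in the paper's constant, but the route to it is a single-annulus iteration rather than a scale-by-scale estimate. As written, your argument has a genuine gap at its central step.
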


\begin{remark}  In \cite{FKS82},  Fabes, Kenig and Serapioni identified four conditions on an absolutely continuous measure $d\mu = w \,dx$ on $(\mathbb{R}^n, d_E)$ as essential in proving Harnack's inequality for solutions to certain degenerate elliptic PDEs (whose degeneracy is ruled by $w$) by means of the implementation of Moser's iterative scheme and John-Nirenberg-type inequalities. For $1 \leq p < \infty$, these conditions, which we next list for the reader's convenience, define $w$ as a $p$-\emph{admissible weight} (see \cite[Section 13]{Haj2} and  \cite[p.7]{HKM}):
\begin{enumerate}[(I)]
\item the doubling property: there exists $C > 0$ such that $\mu(2B) \leq C \mu(B)$ for every Euclidean ball $B \subset \mathbb{R}^n$;\label{list-doubling}
\item the uniqueness condition for the gradient: if $D \subset \mathbb{R}^n$ is an open set and $\{\varphi_j\}_j \subset C^\infty(D)$ satisfy $\int_D |\varphi_j|^p d\mu \rightarrow 0$ and $\int_D |\nabla \varphi_ j - v|^p d\mu \rightarrow 0$ as $j \rightarrow \infty$ for some $v \in L^p(D, \mu)$, then $v \equiv 0$;\label{list-uniq}
\item the $p$-Sobolev inequality: there exist $\sigma > 1$ and $C > 0$ such that for every ball $B:=B(y,R)$ and $\varphi \in C^\infty_c(B)$ it holds \label{list-sobolev}
\begin{equation}\label{p-Sobolev}
\left(\frac{1}{\mu(B)} \int_B |\varphi|^{\sigma p} \, d\mu \right)^{\frac{1}{\sigma p}} \leq C R \left(\frac{1}{\mu(B)} \int_B |\nabla \varphi|^p \, d\mu \right)^{\frac{1}{p}};
\end{equation}
\item the $p$-Poincar\'e inequality: there exists $C > 0$ such that for every ball $B:=B(y,R)$ and $\varphi \in C^\infty(B)$ it holds \label{list-poincare}
\[
\int_B |\varphi - \varphi_B|^p \, d\mu \leq C R^p \int_B |\nabla \varphi|^p \, d\mu,
\]
where $\varphi_B$ stands for the average of $\varphi$ over $B$ with respect to $d\mu$.
\end{enumerate}
The interplay between the conditions above has received considerable attention. Independently,  Saloff-Coste \cite{SC1} and Grygor'yan \cite{Gr} proved that \eqref{list-doubling} and \eqref{list-poincare} imply \eqref{list-sobolev} (and this implication was later systematized by several authors, see \cite[p.79]{Haj2}). Then, Heinonen and Koskela  \cite[Theorem 5.2]{HK95} proved that, as it had been announced by S. Semmes, conditions \eqref{list-doubling} and \eqref{list-poincare} imply \eqref{list-uniq}.

Our Theorem \ref{thm:Sob=>doubling} then contributes to the further understanding of such interplay by establishing that a weaker version of \eqref{list-sobolev} (as given by \eqref{weakEuclideanSob}) implies \eqref{list-doubling}, thus placing the Sobolev inequality \eqref{p-Sobolev} back into the core of the regularity theory aspects of partial differential equations.
\end{remark}

\subsection{Metric spaces} The theory of Sobolev-type inequalities in metric spaces, including imbedding theorems and several definitions of Sobolev spaces, has seen great developments in the past two decades, see for instance \cite{BB11, Fr1, Fr2, Haj,Haj2, Shan} and references therein. A driving motivation for the study of such Sobolev-type inequalities arises in the study of regularity of solutions to certain classes of degenerate elliptic and parabolic PDEs, see for instance \cite[Chapters 7-14]{BB11}, \cite{BM, DJS13, KS, MM} and references therein. In what follows, we briefly review the theory of Sobolev spaces in metric spaces as to formulate a corresponding version of Theorem \ref{thm:Sob=>doubling}. The reader is referred to \cite[Chapters 1-5]{BB11}, \cite[Sections 1-5]{BM}, \cite[Sections 1-3]{KS} for further details.

Let $(X,d)$ be a metric space. For $y \in X$ and $R> 0$ the $d$-ball centered at $y$ with radius $R$ is defined as $B_d(y,R):=\{x \in X: d(x,y) < R\}$.

Given a function $u: X \rightarrow [-\infty, \infty]$, a non-negative Borel function $g: X\rightarrow [0,\infty]$ is called an \emph{upper gradient} of $u$ if for all curves (i.e. non-constant rectifiable continuous mappings) $\gamma : [0, l_\gamma] \rightarrow X$ it holds
$$
|u(\gamma(0)) - u(\gamma (l_\gamma))| \leq \int_\gamma g \, ds.
$$
In particular, if for some $L \geq 1$, $u: X \rightarrow \re$ is an \emph{$L$-Lipschitz function}, i.e., $|u(x)-u(y)| \leq L \, d(x,y)$ for every $x,y\in X$, then the function $lip(u)$ defined for $x \in X$ as
\begin{equation}\label{deflipu}
lip(u)(x):= \liminf_{r \to 0^+} \sup\limits_{y \in B_d(x,r)} \frac{|u(x)-u(y)|}{r}
\end{equation}
is an upper-gradient for $u$ (see \cite[Proposition 1.14]{BB11}). Notice also that $lip(u)(x) \leq L$ for every $x \in X$.

Let $\mu$ be a Borel measure on $(X,d)$ such that $0 < \mu(B) < \infty$ for every $d$-ball $B \subset X$. For $1 \leq p < \infty$ and $u \in L^p(X, \mu)$ set
$$
\norm{u}{N^{1,p}}^p:= \int_X |u|^p d\mu + \inf\limits_{g} \int_X g^p d\mu,
$$
where the infimum is taken over all the upper-gradients $g$ of $u$. Given a $d$-ball $B \subset X$, its Newtonian space with zero boundary values is defined as
$$
N_0^{1,p}(B):=\{f|_B : \norm{f}{N^{1,p}} < \infty \text{ and } f\equiv 0 \text{ on } X \setminus B\}.
$$

\begin{definition}
Let $(X, d, \mu)$ be as above. Given $1\leq p < \infty$ and $1< \sigma <\infty$, we say that the triple $(X, d, \mu)$ admits a \emph{weak $(p\sigma, p)$-Sobolev inequality} with a (finite) constant $C_S > 0$ if for every $d$-ball $B:=B_d(y,R) \subset X$ and every function $\varphi \in N_0^{1,p}(B)$ it holds true that
\begin{align}\label{weakMetricSob}
\left(\frac{1}{\mu(B)} \int_B |\varphi|^{p \sigma} d\mu \right)^{\frac{1}{p \sigma}} & \leq C_S R \left(\frac{1}{\mu(B)} \int_B g^p d\mu \right)^{\frac{1}{p}}\\\nonumber
& + C_S \left(\frac{1}{\mu(B)} \int_B |\varphi|^p d\mu\right)^{\frac{1}{p}},
\end{align}
for all upper-gradients $g$ of $\varphi$.
\end{definition}

Then we have the following metric-space formulation of Theorem \ref{thm:Sob=>doubling}

\begin{theorem}\label{thm:Sob=>doublingMetric} Suppose that, for some $1\leq p < \infty$ and $1< \sigma <\infty$, the triple $(X, d, \mu)$ admits a weak $(p\sigma, p)$-Sobolev inequality with a constant $C_S >0$. Then, the measure $\mu$ is doubling on $(X, d)$. More precisely, there exists a constant $C_D \geq 1$, depending only on $p$, $\sigma$, and $C_S$, such that
\begin{equation*}
\mu(B_d(y,2R)) \leq C_D \, \mu(B_d(y,R)) \quad \forall y \in X, R >0.
\end{equation*}
\end{theorem}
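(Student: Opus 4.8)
The plan is to feed carefully chosen test functions into the weak Sobolev inequality \eqref{weakEuclideanSob} to extract a reverse-doubling-type lower bound on the growth of $\mu$, and then iterate. Fix $y\in\rn$ and $R>0$, and work inside the ball $B:=B(y,2R)$. Consider the standard Lipschitz cutoff $\varphi$ that equals $1$ on $B(y,R)$, vanishes outside $B(y,2R)$, and interpolates linearly in between, so that $|\nabla\varphi|\le 1/R$ on the annulus and $0\le\varphi\le 1$ everywhere. Plugging this $\varphi$ into \eqref{weakEuclideanSob} with the ball $B(y,2R)$: the left side is at least $(\mu(B(y,R))/\mu(B(y,2R)))^{1/(p\sigma)}$ since $\varphi\equiv1$ on $B(y,R)$; on the right side, the first term is $\le C_S(2R)\cdot(1/R)=2C_S$ (using $\mu$-averages and $|\nabla\varphi|\le 1/R$), and the second term is $\le C_S$. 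Hence
\[
\left(\frac{\mu(B(y,R))}{\mu(B(y,2R))}\right)^{\frac{1}{p\sigma}}\le 3C_S,
\]
which immediately gives $\mu(B(y,2R))\le (3C_S)^{p\sigma}\mu(B(y,R))$ — but wait, this is exactly the doubling conclusion, so if this crude estimate worked we would be done. The catch is that $3C_S$ need not be $\ge 1$: if $C_S$ is small the inequality is vacuous. So the real argument must be more delicate, and I expect the true strategy is to run this with $\varphi$ of a controlled shape whose $L^{p\sigma}$ norm and $L^p$ norm can be compared, forcing a genuine inequality regardless of the size of $C_S$.

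Concretely, I would instead take $\varphi$ to be the tent function $\varphi(x)=\big(1-\tfrac{1}{R}\,\mathrm{dist}(x, B(y,R))\big)_+$ supported on $B(y,2R)$, so $\varphi\equiv 1$ on $B(y,R)$, $\varphi=0$ off $B(y,2R)$, $|\nabla\varphi|\le 1/R$, and crucially $0\le\varphi\le1$ so that $|\varphi|^{p\sigma}\le|\varphi|^{p}$ pointwise. Applying \eqref{weakEuclideanSob} over $B(y,2R)$ and writing $a:=\big(\mu(B(y,2R))^{-1}\int|\varphi|^{p\sigma}d\mu\big)^{1/(p\sigma)}$, $b:=\big(\mu(B(y,2R))^{-1}\int|\varphi|^{p}d\mu\big)^{1/p}$, we get $a\le 2C_S + C_S b$, while $|\varphi|\le1$ forces $b^p\le a^{p\sigma}$ wait — rather $\int|\varphi|^p\ge\int|\varphi|^{p\sigma}$, i.e. $b^p\ge a^{p\sigma}$ after dividing by $\mu(B(y,2R))$, hence $b\ge a^{\sigma}$ (since $a\le1$). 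Combining: $a\le 2C_S+C_S b$ must be used together with a lower bound on $a$ of the form $a\ge(\mu(B(y,R))/\mu(B(y,2R)))^{1/(p\sigma)}=:t^{1/(p\sigma)}$ and an upper bound on $b$ of the form $b\le 1$. From $a\le 3C_S$ (if $3C_S<1$) we would derive $t\le(3C_S)^{p\sigma}$... this is again the same issue. The resolution, which I believe is the heart of the paper, is to iterate at dyadic scales: set $a_k:=\big(\fint_{B(y,2^{k+1}R)}|\varphi_k|^{p\sigma}\big)^{1/(p\sigma)}$ for the tent functions $\varphi_k$ adapted to the annuli $B(y,2^{k+1}R)\setminus B(y,2^kR)$, extract a multiplicative recursion relating $\mu(B(y,2^{k+1}R))$ to $\mu(B(y,2^kR))$ with a \emph{uniform} ratio bound, and conclude that the ratios $\mu(B(y,2R))/\mu(B(y,R))$ are bounded independently of the scale.

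The main obstacle, and where I would concentrate the effort, is precisely this gap: a single application of the Sobolev inequality only yields a nontrivial bound on $\mu(B(y,2R))/\mu(B(y,R))$ when $C_S$ is large enough, so the argument cannot be purely local. The standard device to bypass this — and what I expect the authors do — is to apply \eqref{weakEuclideanSob} not on $B(y,2R)$ but on a much larger ball $B(y,NR)$ for a large integer $N=N(p,\sigma,C_S)$ to be chosen, with $\varphi$ a tent function that is $1$ on $B(y,R)$ and supported in $B(y,NR)$ with $|\nabla\varphi|\le 1/((N-1)R)$. Then the gradient term on the right is $\le C_S N/(N-1)\le 2C_S$ still, but now the left side controls $\big(\mu(B(y,R))/\mu(B(y,NR))\big)^{1/(p\sigma)}$, and choosing $N$ large forces, via a telescoping/chaining argument over the $\sim\log_2 N$ dyadic shells between $R$ and $NR$, that at least one shell must roughly double — and then an iteration or a pigeonhole across scales upgrades "at least one doubling among $\log N$ scales" to "every scale doubles with a uniform constant." For the metric-space version (Theorem \ref{thm:Sob=>doublingMetric}), the identical argument goes through verbatim: the tent function $\varphi(x)=\big(1-\mathrm{dist}(x,B_d(y,R))/((N-1)R)\big)_+$ lies in $N_0^{1,p}(B_d(y,NR))$ and admits $g:=\frac{1}{(N-1)R}\mathbf{1}_{B_d(y,NR)}$ as an upper gradient by the definition of $lip(\cdot)$, so \eqref{weakMetricSob} applies with no change, and one concludes exactly as before.
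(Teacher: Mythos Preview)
Your very first computation contains a direction error that propagates through everything afterward. From
\[
\left(\frac{\mu(B(y,R))}{\mu(B(y,2R))}\right)^{\frac{1}{p\sigma}}\le 3C_S
\]
you conclude ``$\mu(B(y,2R))\le (3C_S)^{p\sigma}\mu(B(y,R))$'', but the inequality actually gives $\mu(B(y,R))\le (3C_S)^{p\sigma}\mu(B(y,2R))$, which is the trivial direction (and, when $3C_S<1$, a \emph{reverse}-doubling lower bound, not an upper bound). The problem is not the size of $C_S$; the problem is that you bounded the right-hand side of the Sobolev inequality by an absolute constant, throwing away the $\mu$-ratio it carries. All of your proposed repairs --- comparing $|\varphi|^{p\sigma}$ with $|\varphi|^p$, enlarging to $B(y,NR)$, pigeonhole over dyadic shells --- are attempts to fix a nonexistent issue (size of $C_S$) and leave the real one (wrong direction) untouched; each of them reproduces the same inequality $\mu(B(y,R))\le C\,\mu(B(y,NR))$.

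The paper's mechanism is quite different and is a Moser-type iteration. One fixes the \emph{large} ball $B^*:=B_d(y,2R)$ and applies the Sobolev inequality on $B^*$ to a sequence of cutoffs $\psi_j$ supported in nested balls $B_j$ that shrink from $B_d(y,R)$ down to $B_d(y,R/2)$ (so $r_j=(2^{-j-1}+2^{-1})R$, $\psi_j\equiv 1$ on $B_{j+1}$, $lip(\psi_j)\le 2^{j+2}/R$). The crucial point is \emph{not} to bound the right side by a constant but to keep the factor $(\mu(B_j)/\mu(B^*))^{1/p}$ coming from the support of $\psi_j$; one then gets
\[
\left(\frac{\mu(B_{j+1})}{\mu(B^*)}\right)^{\frac{1}{p\sigma}}\le C_S\,2^{j+4}\left(\frac{\mu(B_j)}{\mu(B^*)}\right)^{\frac{1}{p}}.
\]
The exponent gap $1/(p\sigma)$ versus $1/p$ is what does the work: setting $P_j:=\mu(B_j)^{1/\sigma^{j-1}}$ and iterating, the telescoping product of the $\mu(B^*)^{(1-\sigma)/\sigma^j}$ factors equals $\mu(B^*)^{-1}$, while $P_j\to 1$ because $\mu(B_d(y,R/2))\le\mu(B_j)\le\mu(B)$. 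This yields $1\le C(p,\sigma,C_S)\,\mu(B)/\mu(B^*)$, i.e.\ doubling. Your proposal never sets up this nested family inside a fixed ambient ball, and never exploits the exponent mismatch; without that, no amount of chaining across scales will reverse the inequality.
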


Clearly, Theorem \ref{thm:Sob=>doubling} then follows as a corollary of Theorem \ref{thm:Sob=>doublingMetric}.

\section{Proof of Theorem \ref{thm:Sob=>doublingMetric}}\label{sec:proofThm}

Given a $d$-ball $B:=B_d(y,R)$ set $B^*:=B_d(y,2R)$ and define a family of $d$-Lipschitz functions $\{\psi_j\}_{j \in \na} \subset N_0^{1,p}(B) \subset N_0^{1,p}(B^*)$ as follows: for $j \in \na$ set $r_j:= (2^{-j-1} + 2^{-1}) R$ and
\begin{equation}\label{defpsijMetric}
\psi_j(x):= \left(\frac{r_j - d(x,y)}{r_j - r_{j+1}} \right)^+ \wedge 1.
\end{equation}
Also for $j \in \na$, define the $d$-balls $B_j$ as
$$
\frac{1}{2}B \subset B_j:= \{x \in X: d(x,y) \leq r_j\} \subset B \subset B^*=2B.
$$
Our first step will be to apply the weak-Sobolev inequality \eqref{weakMetricSob} to $\psi_j$ on $B^*$ by choosing the upper-gradient $g_j:=lip(\psi_j)$ as defined in \eqref{deflipu}. In particular, it follows that
\begin{equation}\label{condpsij}
g_j(x) \leq \frac{2^{j+2}}{R} \chi_{B_j}(x) \quad \text{and} \quad 0 \leq \psi_j(x) \leq 1 \quad \forall x \in X.
\end{equation}
Then, by the weak-Sobolev inequality \eqref{weakMetricSob}  applied to each $\psi_j$ on $B^*=2B$ (and using the fact that each $\psi_j$ is supported in $B_j$, so that $B^*$ can be replaced by $B_j$ in the integrals), we obtain
\begin{align}\label{Sobappliedtopsij}
\left(\frac{1}{\mu(B^*)} \int_{B_j} |\psi_j|^{p \sigma} d\mu \right)^{\frac{1}{p \sigma}} & \leq 2 C_S R \left(\frac{1}{\mu(B^*)} \int_{B_j} g_j^p d\mu \right)^{\frac{1}{p}}\\\nonumber
& + C_S \left(\frac{1}{\mu(B^*)} \int_{B_j} |\psi_j|^p d\mu\right)^{\frac{1}{p}}.
\end{align}
By the estimates for $g_j$ and $\psi_j$ in \eqref{condpsij}, we obtain
\begin{equation}\label{boundGradPsij}
\left(\frac{1}{\mu(B^*)} \int_{B_j} g_j^p d\mu \right)^{\frac{1}{p}} \leq \frac{2^{j+2}}{R} \left(\frac{\mu(B_j)}{\mu(B^*)} \right)^{\frac{1}{p}}
\end{equation}
and
\begin{equation}\label{boundPsijp}
\left(\frac{1}{\mu(B^*)} \int_{B_j} |\psi_j|^p d\mu\right)^{\frac{1}{p}} \leq  \left(\frac{\mu(B_j)}{\mu(B^*)} \right)^{\frac{1}{p}}.
\end{equation}
On the other hand, since $\psi_j \equiv 1$ on $B_{j+1} \subset B_j$, it follows that
\begin{equation}\label{boundEjplus1}
 \left(\frac{\mu(B_{j+1})}{\mu(B^*)} \right)^{\frac{1}{p \sigma}} \leq \left(\frac{1}{\mu(B^*)} \int_{B_j} |\psi_j|^{p \sigma} d\mu \right)^{\frac{1}{p \sigma}}.
\end{equation}
Therefore, by combining \eqref{boundEjplus1}, \eqref{boundPsijp}, and \eqref{boundGradPsij} with \eqref{Sobappliedtopsij}, we obtain
\begin{align*}
 \left(\frac{\mu(B_{j+1})}{\mu(B^*)} \right)^{\frac{1}{p \sigma}} & \leq 2 C_S 2^{j+2}  \left(\frac{\mu(B_j)}{\mu(B^*)} \right)^{\frac{1}{p}} + C_S  \left(\frac{\mu(B_j)}{\mu(B^*)} \right)^{\frac{1}{p}} \leq C_S 2^{j+4} \left(\frac{\mu(B_j)}{\mu(B^*)} \right)^{\frac{1}{p}} .
\end{align*}
Raising to the power $p/\sigma^{j-1}$ then yields
\begin{equation}\label{preiterationPj}
 \left(\frac{\mu(B_{j+1})}{\mu(B^*)} \right)^{\frac{1}{\sigma^j}} \leq C_S^{\frac{p}{\sigma^{j-1}}} 2^{\frac{p(j+4)}{\sigma^{j-1}}}  \left(\frac{\mu(B_j)}{\mu(B^*)} \right)^{\frac{1}{\sigma^{j-1}}}.
\end{equation}
At this point, for $j \in \na$, define $P_j:= \mu(B_j)^{\frac{1}{\sigma^{j-1}}}$, so that \eqref{preiterationPj} can be recast as
\begin{equation}\label{preiterationPj2}
P_{j+1} \leq C_S^{\frac{\sigma p}{\sigma^{j}}} 2^{\frac{\sigma p (j+4)}{\sigma^{j}}} \mu(B^*)^{\frac{1-\sigma}{\sigma^j}} P_j.
\end{equation}
Notice that, from the construction of $B_j$, we have $B_d(y,R/2) \subset B_j \subset B$ for every $j \in \na$, so that $0 < \mu(B_d(y,R/2)) \leq \mu(B_j) \leq \mu(B) < \infty$ for every $j \in \na$. Then
$$
\mu(B_d(y,R/2))^{\frac{1}{\sigma^{j-1}}} \leq P_j \leq \mu(B)^{\frac{1}{\sigma^{j-1}}} \quad \forall j \in \na,
$$
which implies $\lim\limits_{j\to\infty}P_j =1$. Now, by iterations of \eqref{preiterationPj2}, we get
\begin{equation}\label{postiterationsPsij}
1=\lim\limits_{j\to\infty}P_j \leq P_1 \prod\limits_{j=1}^\infty [C_S^{\sigma p} 2^{\sigma p (j+4)} \mu(B^*)^{1-\sigma}]^{\frac{1}{\sigma^j}},
\end{equation}
with
$$
\prod\limits_{j=1}^\infty [C_S^{\sigma p}]^{\frac{1}{\sigma^j}} = \exp \left[\sigma p \left( \sum\limits_{j=1}^\infty \frac{1}{\sigma^j} \right)  \log C_S \right] = C_S^{\frac{p \sigma}{\sigma -1}},
$$
$$
\prod\limits_{j=1}^\infty [2^{p \sigma (j+4)}]^{\frac{1}{\sigma^j}} = \exp \left[\sigma p \left( \sum\limits_{j=1}^\infty \frac{j+4}{\sigma^j} \right)  \log 2 \right] =: K_1(\sigma,p)<\infty,
$$
and
\begin{align*}
\prod\limits_{j=1}^\infty [\mu(B^*)^{1-\sigma}]^{\frac{1}{\sigma^j}} & = \exp \left[ \left( \sum\limits_{j=1}^\infty \frac{(1-\sigma)}{\sigma^j} \right)  \log \mu(B^*) \right]\\
& = \exp \left[ \sum\limits_{j=1}^\infty \left( \frac{1}{\sigma^j}- \frac{1}{\sigma^{j-1}} \right)  \log \mu(B^*) \right] = \frac{1}{\mu(B^*)}.
\end{align*}
Consequently, \eqref{postiterationsPsij} yields
$$
1 \leq  C_S^{\frac{p \sigma}{\sigma -1}} K_1(\sigma, p) \frac{P_1}{\mu(B^*)},
$$
which, together with the fact that $P_1 = \mu(B_1) \leq \mu(B)$, implies
$$
\mu(B^*) \leq C_S^{\frac{p \sigma}{\sigma -1}} K_1(\sigma, p) \mu(B)
$$
and \eqref{mudoubling} follows with $C_D:= C_S^{\frac{p \sigma}{\sigma -1}} K_1(\sigma, p)$.
\qed

\section{Further extensions of Theorem \ref{thm:Sob=>doubling}}

From the proof of Theorem \ref{thm:Sob=>doublingMetric} the central role of the functions $\{\psi_j\}$ becomes quite apparent. In this section we reformulate Theorem \ref{thm:Sob=>doubling} in other contexts where corresponding functions $\{\psi_j\}$ can be constructed.

\subsection{Dirichlet forms} Since the 1990's, deep connections between Sobolev and Poincar\'e inequalities, doubling properties for measures, and elliptic and parabolic Harnack inequalities have been discovered and further developed in the ample context of strongly local, regular Dirichlet forms (including analysis on complete Riemannian manifolds, Alexandrov spaces, self-similar sets, graphs, etc.) see, for instance, \cite{BGK, BM1, BM2, BM3, Gr,   Ki, KSW,KST, Ku, Kuw,  KMS, SC1, SC2, St1, St2, St3, St4} and references therein. In this subsection we will recast Theorem \ref{thm:Sob=>doubling} in the language of Dirichlet forms. A brief review of some basic notions is in order.

Let $(X,\tau)$ be a Hausdorff, locally compact, separable topological space and let $\mu$ be a Radon measure on $(X,\tau)$ such that $\mu(U) >0$ for every nonempty open subset $U \subset X$.

Let $\calF$ be a dense subspace of $L^2(X,\mu):=\{u:X \rightarrow \re : \int_X u^2 d\mu < \infty \}$ and let $\calE : \calF \times \calF \rightarrow [0,\infty)$ be a bilinear, non-negative definite (that is, $\calE(u,u) \geq 0 \: \forall u \in \calF$), and symmetric functional. For every $u \in \calF$ set $\calE(u):=\calE(u,u)$. Assume that $(\calE, \calF)$ is \emph{closed}, that is, $\calF$ equipped with the norm $\norm{u}{\calF}:=(\norm{u}{L^2(X,\mu)} + \calE(u))^{1/2}$ becomes a Hilbert space and that $(\calE, \calF)$ is \emph{Markovian}; that is, for every $u \in \calF$, it follows that $u_1:= (0 \vee u ) \wedge 1 \in \calF$ and $\calE(u_1) \leq \calE(u)$. When all the above conditions are met, $(\calF, \calE)$ is called a \emph{Dirichlet form} on $L^2(X,\mu)$. We refer the reader to \cite[Chapters 1-3]{FOT} for further details and properties of Dirichlet forms.

We intend to state a Sobolev-type inequality along the lines of \eqref{weakEuclideanSob} involving a Dirichlet form $(\calF, \calE)$; hence, the next step will be about imposing conditions on $(\calF, \calE)$ as to equip $X$ (which thus far is just a topological, and not necessarily metric, space) with a convenient distance.

Let $C_c(X)$ denote the class of real-valued, continuous functions on $X$ with compact support equipped with the uniform topology. Following the notation in \cite[Section 1.1]{FOT}, a Dirichlet form is \emph{regular} if $\calF \cap C_c(X)$ is dense in both $(\calF, \norm{\cdot}{\calF})$ and $(C_c(X), \norm{\cdot}{L^\infty(X)})$ and it is \emph{strongly local} if $\calE(u,v)=0$ for all $u,v \in \calF$ with $u \equiv 1$ on a neighborhood of $\text{supp}(v)$. A strongly local regular Dirichlet form $(\calF, \calE)$ admits the integral representation
\begin{equation}
\calE(u,v) = \int_X d\Gamma(u,v)  \quad \forall u,v \in \calF,
\end{equation}
where $\Gamma$ (called the \emph{energy measure} of $(\calF, \calE)$) is a bilinear, non-negative definite, symmetric form with values in the signed Radon measures of $X$ (see \cite[Section 3.2]{FOT}).  Moreover, the energy measure has a local character, meaning that given $u, v \in \calF$ and an open set $\Omega \subset X$, the restriction of $\Gamma(u,v)$ to $\Omega$ depends only on the restrictions of $u$ and $v$ to $\Omega$. We write $u \in \calF_{loc}(\Omega)$ if $u \in L^2_{loc}(U,\mu)$ and for every compact subset $K\subset \Omega$ there exists $w \in \calF$ such that $u=w$ $\mu$-a.e. on $K$ (see \cite[p.130]{FOT}). Then, the local character of $\Gamma$ allows to unambiguously define it on $\calF_{loc}(\Omega) \times \calF_{loc}(\Omega)$. The energy measure $\Gamma$ induces a pseudo-metric $\rho$ (called the \emph{intrinsic metric} on $X$) defined for $x,y \in X$ as
\begin{equation}
\rho(x,y):=\sup \{u(x) - u(y) : u \in \calF_{loc}(X) \cap C(X) \text{ and } d\Gamma(u,u)\leq d\mu \text { on } X\},
\end{equation}
where the condition $d\Gamma(u,u) \leq d\mu$ on $X$ means that the measure $\Gamma(u,u)$ is absolutely continuous with respect to $\mu$ and the Radon-Nikodym derivative $d\Gamma(u,u)/d\mu \leq 1$ on $X$. In general $\rho$ could be degenerate in the sense that $\rho(x,y) = \infty$ or $\rho(x,y)=0$ can happen for some $x \ne y$. One way to avoid this degeneracy is to introduce the following:

\medskip
\noindent \textbf{Assumption (A):} All $\rho$-balls $B_\rho(y,R):=\{x \in X : \rho(x,y) < R\}$ are relatively compact in $(X,\tau)$.\\
\noindent \textbf{Assumption (A'):} The topology induced by $\rho$ is equivalent to $\tau$ (the original topology in $X$).
\medskip

Then, under Assumption (A'), it follows that if $X$ is connected and given $x,y \in X$, with $x \ne y$, then $0 < \rho(x,y) < \infty$, thus turning $\rho$ into a metric on $X$ (see \cite[Section 4.2]{St1}). Assumptions (A) and (A') will also be used to guarantee \eqref{Gammarhoyr} below.
\begin{definition}
Let $(X, \tau)$ and $\mu$ be as above and assume that $X$ is connected and Assumptions (A) and (A') hold true. Let $(\calF, \calE)$ be a regular, strongly local Dirichlet form on $L^2(X,\mu)$. 
Following \cite[p.38]{BM3}, let $\Omega \subset X$ be open, and for $u \in \calF_{loc}(\Omega)$ write $\alpha(u,u)=d\Gamma(u,u)/d\mu$, the Radon-Nikodym derivative. Given $1 \leq p < \infty$, the Dirichlet-Sobolev space $D_p[\calE, \Omega]$ is defined as
$$
D_p[\calE, \Omega]:=\{u \in \calF_{loc}(\Omega): \alpha(u,u)\in L^1_{loc}(\Omega, \mu), \ \int_\Omega \alpha(u,u)^{\frac{p}{2}} d\mu + \int_\Omega |u|^p d\mu < \infty \}.
$$
\end{definition}

Given $1\leq p < \infty$ and $1< \sigma <\infty$, we say that the Dirichlet form $(\calF, \calE)$ admits a \emph{weak $(p\sigma, p)$-Sobolev inequality} with a (finite) constant $C_S > 0$ if for every $\rho$-ball $B:=B_\rho(y,R) \subset X$ and every function $\varphi \in D_p[\calE, B]$ with $\text{supp}(\varphi) \subset B$ it holds true that
\begin{align}\label{weakDirichletSob}
\left(\frac{1}{\mu(B)} \int_B |\varphi|^{p \sigma} d\mu \right)^{\frac{1}{p \sigma}} & \leq C_S R \left(\frac{1}{\mu(B)} \int_B \alpha(u,u)^{\frac{p}{2}} d\mu  \right)^{\frac{1}{p}}\\\nonumber
& + C_S \left(\frac{1}{\mu(B)} \int_B |\varphi|^p d\mu\right)^{\frac{1}{p}}.
\end{align}

Then we have the following Dirichlet-form version of Theorem \ref{thm:Sob=>doubling}

\begin{theorem}\label{thm:Sob=>doublingDirichlet} Let $(X, \tau)$ and $\mu$ be as above and assume that $X$ is connected and that Assumptions (A) and (A') hold true. Let $(\calF, \calE)$ be a regular, strongly local Dirichlet form on $L^2(X,\mu)$. Suppose that, for some $1\leq p < \infty$ and $1< \sigma <\infty$, the Dirichlet form $(\calF, \calE)$ admits a weak $(p\sigma, p)$-Sobolev inequality with a constant $C_S >0$. Then, the measure $\mu$ is doubling on $(X, \rho)$. More precisely, there exists a constant $C_D \geq 1$, depending only on $p$, $\sigma$, and $C_S$, such that
\begin{equation*}
\mu(B_\rho(y,2R)) \leq C_D \, \mu(B_\rho(y,R)) \quad \forall y \in X, R > 0.
\end{equation*}
\end{theorem}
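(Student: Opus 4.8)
The plan is to run the proof of Theorem~\ref{thm:Sob=>doublingMetric} essentially verbatim, with the intrinsic metric $\rho$ in the role of $d$ and the energy density $\alpha(\cdot,\cdot)^{1/2}$ in the role of the upper gradient $lip(\cdot)$. Fix a $\rho$-ball $B:=B_\rho(y,R)$, set $B^*:=B_\rho(y,2R)$, and for $j\in\na$ put $r_j:=(2^{-j-1}+2^{-1})R$ and
\[
\psi_j(x):=\left(\frac{r_j-\rho(x,y)}{r_j-r_{j+1}}\right)^+\wedge 1,\qquad
B_j:=\{x\in X:\rho(x,y)\le r_j\}.
\]
Exactly as in Section~\ref{sec:proofThm} one has $B_\rho(y,R/2)\subset B_{j+1}\subset B_j\subset B\subset B^*$, each $\psi_j$ satisfies $0\le\psi_j\le1$ and $\psi_j\equiv1$ on $B_{j+1}$, and, since $\rho$ is continuous (Assumption (A$'$)), $\mathrm{supp}(\psi_j)$ is the closed set $B_j$, which is relatively compact by Assumption (A); as $r_j<R$, this support lies inside $B\subset B^*$. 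The defining profile $t\mapsto\big((r_j-t)/(r_j-r_{j+1})\big)^+\wedge 1$ is Lipschitz with constant $(r_j-r_{j+1})^{-1}=2^{j+2}/R$ and is constant off the interval $(r_{j+1},r_j)$.

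The one genuinely new point, and the main obstacle, is to check that $\psi_j$ is an admissible test function in \eqref{weakDirichletSob} together with the analogue of \eqref{condpsij}, namely $\psi_j\in D_p[\calE,B^*]$ with $\mathrm{supp}(\psi_j)\subset B^*$ and
\[
\alpha(\psi_j,\psi_j)^{1/2}\le\frac{2^{j+2}}{R}\,\chi_{B_j}\qquad\mu\text{-a.e. on }X.
\]
This rests on two standard properties of strongly local regular Dirichlet forms. First, under Assumptions (A) and (A$'$), the function $\rho(\cdot,y)$ belongs to $\calF_{loc}(X)\cap C(X)$ and satisfies $d\Gamma(\rho(\cdot,y),\rho(\cdot,y))\le d\mu$, i.e. $\alpha(\rho(\cdot,y),\rho(\cdot,y))\le1$ $\mu$-a.e.\ (see \cite[Section~4.2]{St1}; this is the property alluded to before the statement). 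Second, the chain rule for energy measures: if $F:\re\to\re$ is Lipschitz with constant $L$ and $w\in\calF_{loc}$, then $F\circ w\in\calF_{loc}$, $d\Gamma(F\circ w,F\circ w)\le L^2\,d\Gamma(w,w)$, and the energy measure of $F\circ w$ is carried by the set where $F\circ w$ is non-constant. Applying this with $w=\rho(\cdot,y)$ and $F$ the profile above yields the displayed bound (the support restriction to $B_j$ coming from $F$ being constant off $(r_{j+1},r_j)$), and since $\psi_j$ is bounded, continuous, compactly supported in $B^*$, with $\int_X d\Gamma(\psi_j,\psi_j)\le (2^{j+2}/R)^2\mu(B_j)<\infty$ and $\int_X\psi_j^2\,d\mu\le\mu(B^*)<\infty$, a routine cutoff argument using regularity gives $\psi_j\in\calF\cap C_c(X)$, whence $\psi_j\in D_p[\calE,B^*]$.

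Once these properties of $\{\psi_j\}$ are in place, the remainder of the argument is identical to Section~\ref{sec:proofThm}: applying \eqref{weakDirichletSob} to $\psi_j$ on $B^*$ (replacing the domain of integration by $B_j$, where $\psi_j$ is supported, so that the factor $2C_SR$ appears), bounding the gradient term by $\tfrac{2^{j+2}}{R}(\mu(B_j)/\mu(B^*))^{1/p}$ and the zero-order term by $(\mu(B_j)/\mu(B^*))^{1/p}$, and bounding the left-hand side below by $(\mu(B_{j+1})/\mu(B^*))^{1/(p\sigma)}$, one obtains
\[
\left(\frac{\mu(B_{j+1})}{\mu(B^*)}\right)^{\frac{1}{p\sigma}}\le C_S\,2^{j+4}\left(\frac{\mu(B_j)}{\mu(B^*)}\right)^{\frac{1}{p}}.
\]
Setting $P_j:=\mu(B_j)^{1/\sigma^{j-1}}$ and noting $\mu(B_\rho(y,R/2))\le\mu(B_j)\le\mu(B)$, so that $P_j\to1$, the same telescoping of the infinite product $\prod_{j\ge1}[C_S^{\sigma p}2^{\sigma p(j+4)}\mu(B^*)^{1-\sigma}]^{1/\sigma^j}$ yields $\mu(B^*)\le C_S^{p\sigma/(\sigma-1)}K_1(\sigma,p)\,\mu(B)$, i.e.\ the desired doubling inequality with the same constant $C_D:=C_S^{p\sigma/(\sigma-1)}K_1(\sigma,p)$. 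Thus the whole content of the Dirichlet-form extension is the translation of the cutoffs $\psi_j$ and of the bound \eqref{condpsij} into energy-measure language, after which the iteration carries over without change.
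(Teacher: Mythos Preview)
Your proposal is correct and follows essentially the same approach as the paper: construct the same cutoffs $\psi_j$, establish the energy bound $d\Gamma(\psi_j,\psi_j)\le(2^{j+2}/R)^2\,d\mu$ via properties of the intrinsic metric from \cite{St1}, and then run the iteration of Section~\ref{sec:proofThm} verbatim. The only cosmetic difference is that the paper invokes \cite[Lemma~1$'$]{St1} for the truncated distance $\rho_{y,r}=(r-\rho(\cdot,y))^+\in\calF\cap C_c(X)$ together with the truncation property and locality of $\Gamma$, whereas you phrase the same computation as a chain rule for a Lipschitz profile applied to $\rho(\cdot,y)\in\calF_{loc}$; these are equivalent formulations of the same fact.
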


\begin{proof} Given a $\rho$-ball $B:=B_\rho(y,R)$, set $B^*:=B_\rho(y,2R)$. By \cite[Lemma 1' on p.191]{St1}, assumptions (A) and (A') imply that for every $y \in X$ and every $r > 0$, the function $\rho_{y,r}: x \mapsto (r- \rho(x,y))^+$ satisfies $\rho_{y,r} \in \calF \cap C_c(X)$ and
\begin{equation}\label{Gammarhoyr}
d\Gamma(\rho_{y,r}, \rho_{y,r}) \leq d\mu.
\end{equation}
Then, for each $j \in \na$ set $r_j:= (2^{-j-1} + 2^{-1}) R$ and, just as in the case for metric spaces in \eqref{defpsijMetric},  define
\[
\psi_j(x):= \left(\frac{r_j - \rho(x,y)}{r_j - r_{j+1}} \right)^+ \wedge 1.
\]
For each $j \in \na$ define $s_j:=1/ (r_j - r_{j-1}) = 2^{j+2}/R$ and the set  $D_j:= \{x \in X: s_j (r_j - \rho(x,y)) > 1\}$ so that, by the definition of $\psi_j$, the so-called truncation property (see \cite[p.190]{St1}), the fact that $d\Gamma(1,1) =0$ (due to the locality of $\Gamma$, see \cite[p.189]{St1}), and the estimate \eqref{Gammarhoyr} we have
\begin{align*}
d\Gamma(\psi_j, \psi_j) & = d\Gamma(s_j \rho_{y, r_j} \wedge 1,s_j \rho_{y, r_j} \wedge 1) = 1_{D_j} d\Gamma(1,1) + 1_{X \setminus D_j} d\Gamma(s_j \rho_{y, r_j}, s_j \rho_{y, r_j}) \\
& = 1_{X \setminus D_j} d\Gamma(s_j \rho_{y, r_j}, s_j \rho_{y, r_j}) = 1_{X \setminus D_j} s_j^2 d\Gamma(\rho_{y, r_j}, \rho_{y, r_j}) \leq s_j^2 d\mu.
\end{align*}
Hence,
\begin{equation}\label{gradientboundDirichlet}
d\Gamma(\psi_j, \psi_j) \leq \left(\frac{2^{j+2}}{R}\right)^2 d\mu\quad \forall j \in \na.
\end{equation}
As usual, define $B_j$ by
$$
\frac{1}{2}B \subset B_j:= \{x \in X: \rho(x,y) \leq r_j\} \subset B \subset B^*=2B,
$$
and the proof of Theorem \ref{thm:Sob=>doublingDirichlet} follows along the same lines as the one for Theorem \ref{thm:Sob=>doublingMetric}. Notice that, alternatively, one could use the construction from \cite[Section 3]{BM2} to produce $\{\psi_j\}_{j \in \na} \subset \calF_{loc}(X) \cap C(X)$ with $\psi_j \equiv 1$ on $B_{j+1}$, $\psi_j \equiv 0$ on $X\setminus B_j$, $0 \leq \psi_j \leq 1$, and
$$
d\Gamma(\psi_j, \psi_j) \leq 10\left(\frac{2^{j+2}}{R}\right)^2 d\mu \quad \forall j \in \na.
$$

\end{proof}

\subsection{A subelliptic version} In $(\rn,d_E)$ as well as in the metric-space and Dirichlet-form contexts above, the various sequences $\{\psi_j\}_{j \in \na}$ had ``bounded gradients'' in the sense of \eqref{condpsij} and \eqref{gradientboundDirichlet}. These uniform bounds were used to obtain the corresponding inequalities of the type \eqref{boundGradPsij}. Next we will show how in certain subelliptic contexts the uniform bounds on the gradients can be weakened to suitable integral bounds. This will allow us to relate our Theorem \ref{thm:Sob=>doubling} to the notion of accumulating sequence of Lipschitz cut-off functions (see Remark \ref{ASOLF}).

We largely follow the terminology from \cite[Section 1]{SW1}. Consider an open subset $\Omega\subset\mathbb{R}^n$ (in the Euclidean topology) and let
$$
Q : \Omega \rightarrow \{\text{non-negative semi-definite }  n \times n \text{ matrices}\}
$$
be a locally bounded function on $\Omega$. For a Lipschitz function $u : \Omega \rightarrow \re$ (throughout this subsection, Lipschitz means Lipschitz with respect to the Euclidean distance), define its $Q$-gradient Lebesgue-a.e. in $\Omega$ as
$$
[\nabla u]_Q:= ( \nabla u^T Q \nabla u)^{\frac{1}{2}}.
$$

\begin{definition}
Let $d : \Omega \times \Omega \rightarrow [0, \infty)$ be a metric on $\Omega$ that generates a topology equivalent to the Euclidean topology in $\Omega$. 
Given $s >1$ we say that the structure $(\Omega, d, Q)$ admits \emph{accumulating sequences of Lipschitz cut-off functions} with exponent $s$ (see \cite[p.9]{SW1}) if there exist constants $0 < \nu < 1$, $K > 0$, and $N > 1$ such that for every $d$-ball $B:=B_d(y,R)$, with $R < \mathrm{dist}(y, \partial \Omega) /6$,  there exists a sequence $\{\psi_j\}_{j \in \na}$ of functions defined on $\Omega$ such that
\begin{align}\label{suppPsi1inB}
\mathrm{supp}(\psi_1) &\subset B,\\ \label{BnuRinPsij=1}
 B_d(y,\nu R) &\subset \{x:\psi_j(x)=1\} \: \forall j \in \na,\\ \label{suppPsij1}
 \mathrm{supp} (\psi_{j+1}) &\subset \{x:\psi_j(x)=1\}\: \forall j \in \na,\\ \label{psijLips01}
\psi_j \text{ is Lipschitz and }  0 & \le \psi_j\le 1\: \forall j \in \na,\\
\left(\frac{1}{|B|}\int_{B}[\nabla\psi_j]_Q^s \, dx \right)^{\frac{1}{s}} &\leq \frac{K  N^j}{R}\: \forall j \in \na,\label{s-averageGradQ}
\end{align}
where $|B|$ stands for the Lebesgue measure of $B$. 
\end{definition}

For $1 \leq p < \infty$, let $\mathcal{W}^{1,p}_Q(\Omega, dx)$ denote the closure of the Lipschitz functions on $\Omega$ under the norm
$$
\norm{u}{\mathcal{W}^{1,p}_Q(\Omega, dx)}:= \norm{u}{L^p(\Omega, dx)} + \norm{[\nabla u]_Q}{L^p(\Omega,dx)}.
$$
Now for $1 \leq p < \infty$ and $1 < \sigma < \infty$ we say that  $(\Omega, d, Q)$ admits a \emph{weak $(p\sigma, p)$-Sobolev inequality} with a (finite) constant $C_S > 0$ if for every $d$-ball $B:=B_d(y,R) \subset X$, with $0 < R < \text{dist}(y, \partial \Omega)/6$, and every function $\varphi \in \mathcal{W}^{1,p}_Q(\Omega, dx)$ with $\text{supp}(\varphi) \subset B$ it holds true that
\begin{align}\label{weakSubellipticSob}
\left(\frac{1}{|B|} \int_B |\varphi|^{p \sigma} \, dx \right)^{\frac{1}{p \sigma}} & \leq C_S R \left(\frac{1}{|B|} \int_B [\nabla \varphi]_Q^p \, dx \right)^{\frac{1}{p}}\\\nonumber
& + C_S \left(\frac{1}{|B|} \int_B |\varphi|^p \, dx \right)^{\frac{1}{p}}.
\end{align}

As usual, given $r > 1$ denote its dual H\"older exponent as $r'$, defined by $r'+r=rr'$. Then we have the following  formulation of Theorem \ref{thm:Sob=>doubling}

\begin{theorem}\label{thm:Sob=>doublingSubelliptic} Suppose that, for some $1\leq p < \infty$, $1< \sigma <\infty$, and $s > p \sigma'$, the structure $(\Omega, d, Q)$ admits accumulating sequences of Lipschitz cut-off functions with exponent $s$ as well as a weak $(p\sigma, p)$-Sobolev inequality with a constant $C_S >0$. Then, Lebesgue measure $dx$ is doubling on $(\Omega, d)$. More precisely, there exists a constant $C_D \geq 1$, depending only on $p$, $\sigma$, $s$, $K$, $N$, and $C_S$, such that
\begin{equation*}
|B_d(y,2R)| \leq C_D \, |B_d(y,R)| \quad \forall y \in X, \forall \: 0 < R < \text{dist}(y, \partial \Omega)/6.
\end{equation*}
\end{theorem}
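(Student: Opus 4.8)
The plan is to reproduce the proof of Theorem~\ref{thm:Sob=>doublingMetric} with a single modification: the uniform gradient bound \eqref{condpsij} is no longer available, so in its place one feeds the integral bound \eqref{s-averageGradQ} into H\"older's inequality, and the hypothesis $s>p\sigma'$ is precisely what keeps the resulting recursion contracting.

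Fix $y\in\Omega$ and $R$ in the admissible range, set $B:=B_d(y,R)$, $B^*:=B_d(y,2R)$, and let $\{\psi_j\}_{j\in\na}$ be the accumulating sequence of Lipschitz cut-off functions associated with $B$. Put $E_0:=B$ and $E_j:=\{x\in\Omega:\psi_j(x)=1\}$ for $j\ge1$; then \eqref{suppPsi1inB}, \eqref{BnuRinPsij=1} and \eqref{suppPsij1} give the nested chain
\[
B_d(y,\nu R)\subset E_j\subset\text{supp}(\psi_j)\subset E_{j-1}\subset B\subset B^*\qquad(j\in\na),
\]
so that $0<|B_d(y,\nu R)|\le|E_j|\le|B|\le|B^*|<\infty$ for every $j$.

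First I would apply the weak $(p\sigma,p)$-Sobolev inequality \eqref{weakSubellipticSob} on $B^*$ to each $\psi_j$ (it lies in $\mathcal{W}^{1,p}_Q(\Omega, dx)$, being Lipschitz with $\text{supp}(\psi_j)\subset B\subset B^*$). On the left, $\psi_j\ge\chi_{E_j}$ yields the bound $(|E_j|/|B^*|)^{1/(p\sigma)}$. On the right, in the gradient term I would restrict the integral to $\text{supp}(\psi_j)\subset E_{j-1}$, apply H\"older with exponents $s/p$ and $(s/p)'$ (legitimate since $s>p\sigma'>p$), and then invoke \eqref{s-averageGradQ}; using $|B|\le|B^*|$ this bounds the gradient term by a constant multiple of $N^{j}(|E_{j-1}|/|B^*|)^{1/p-1/s}$. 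The lower-order term is treated as in the metric case: $0\le\psi_j\le1$, $\text{supp}(\psi_j)\subset E_{j-1}$ and $|E_{j-1}|\le|B^*|$ give at most $C_S(|E_{j-1}|/|B^*|)^{1/p}\le C_S(|E_{j-1}|/|B^*|)^{1/p-1/s}$. With $a_j:=|E_j|/|B^*|\in(0,1]$, $\gamma:=1/(p\sigma)$, $\beta:=1/p-1/s$, these combine into
\[
a_j^{\gamma}\le A\,N^{j}\,a_{j-1}^{\beta}\qquad(j\in\na),
\]
where $A$ depends only on $C_S$ and $K$; crucially, $s>p\sigma'$ is exactly the inequality $\beta>\gamma$.

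It then remains to iterate, as in \eqref{preiterationPj2}--\eqref{postiterationsPsij}. Taking logarithms, with $b_j:=-\log a_j\ge0$ and $\theta:=\gamma/\beta\in(0,1)$, the recursion reads $b_{j-1}\le\theta\,b_j+\beta^{-1}(\log A+j\log N)$; iterating $m$ times, using that $b_m$ stays bounded (namely $0\le b_m\le\log(|B^*|/|B_d(y,\nu R)|)<\infty$, so $\theta^{m}b_m\to0$), and letting $m\to\infty$, one gets $b_0\le\beta^{-1}\bigl((1-\theta)^{-1}\log A+(1-\theta)^{-2}\log N\bigr)=:C'$. Since $a_0=|B|/|B^*|$, this says $|B^*|\le e^{C'}|B|$, i.e.\ $|B_d(y,2R)|\le C_D|B_d(y,R)|$ with $C_D:=\max\{1,e^{C'}\}$, depending only on $p,\sigma,s,K,N,C_S$. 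The step I expect to be the true obstacle is the gradient estimate: with only $L^{s}$-average control on $[\nabla\psi_j]_Q$, H\"older costs the factor $|E_{j-1}|^{1/p-1/s}$ rather than the $|E_{j-1}|^{1/p}$ that a pointwise bound would give, so the recursion is $a_j^{1/(p\sigma)}\lesssim N^{j}a_{j-1}^{1/p-1/s}$, and the iteration converges only when the exponent on the right beats the one on the left, that is, when $s>p\sigma'$ --- which is where that hypothesis is used. (A minor bookkeeping point: applying \eqref{weakSubellipticSob} on $B^*=B_d(y,2R)$ requires $2R$ to be admissible, and the remaining values of $R$ in the statement are recovered by the usual self-improvement of doubling inequalities via a finite covering by $d$-balls of smaller radius.)
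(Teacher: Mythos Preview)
Your proof is correct and follows essentially the same route as the paper: apply the weak Sobolev inequality on $B^*$ to the accumulating cut-offs, use H\"older with exponent $s/p$ to turn the integral gradient bound \eqref{s-averageGradQ} into an estimate carrying the factor $(|E_{j-1}|/|B^*|)^{1/p-1/s}$, observe that $s>p\sigma'$ makes the resulting recursion contracting, and iterate as in Section~\ref{sec:proofThm}. The only cosmetic differences are that the paper indexes with $B_j:=\mathrm{supp}(\psi_j)$ rather than your $E_j:=\{\psi_j=1\}$ (equivalent via \eqref{suppPsij1}) and writes the iteration multiplicatively with $P_j=|B_j|^{1/\beta^{j-1}}$ and $\beta=\sigma(1-p/s)$, which is your $1/\theta$; your closing remark about the admissible range of $R$ is in fact more careful than the paper's own treatment.
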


\begin{proof} Given a $d$-ball $B:=B_d(y,R)$, with  $0 < R < \text{dist}(y, \partial \Omega)/6$, just as in the proof of Theorem \ref{thm:Sob=>doublingMetric}, apply the weak-Sobolev inequality \eqref{weakSubellipticSob} to the accumulating sequence of Lipschitz cut-off functions $\{\psi_j\}$ on the $d$-ball $B^*:=B_d(y,2R)$ and, for $j \in \na$, set $B_j:=\mathrm{supp}(\psi_j)$, to obtain
\begin{align*}
\left(\frac{1}{|B^*|} \int_{B_j} |\psi_j|^{p \sigma} \, dx \right)^{\frac{1}{p \sigma}} & \leq 2 C_S R \left(\frac{1}{|B^*|} \int_{B_j} [\nabla \psi_j]_Q^p \, dx \right)^{\frac{1}{p}}\\\nonumber
& + C_S \left(\frac{1}{|B^*|} \int_{B_j} |\psi_j|^p \, dx\right)^{\frac{1}{p}}.
\end{align*}
The main step in the proof is to find a substitute for \eqref{boundGradPsij}. Set $q:=s/p > 1$ so that $q':=s/(s-p) >1$. By applying H\"older's inequality with $q$ and $q'$ and using \eqref{s-averageGradQ}, we get
\begin{align*}
\left(\frac{1}{|B^*|} \int_{B_j} [\nabla \psi_j]_Q^p dx \right)^{\frac{1}{p}} &\leq  \left(\frac{|B_j|}{|B^*|} \right)^{\frac{1}{p} - \frac{1}{s}} \left(\frac{1}{|B^*|} \int_{B_j} [\nabla \psi_j]_Q^s dx \right)^{\frac{1}{s}} \\
& \leq  \frac{KN^j}{2R} \left(\frac{|B_j|}{|B^*|} \right)^{\frac{1}{p} - \frac{1}{s}}.
\end{align*}
By properties \eqref{suppPsij1} and \eqref{psijLips01}, and from the inequalities above, it follows that
\begin{align*}
 \left(\frac{|B_{j+1}|}{|B^*|} \right)^{\frac{1}{p \sigma}}& \leq \left(\frac{1}{|B^*|} \int_{B_j} |\psi_j|^{p \sigma} dx \right)^{\frac{1}{p \sigma}} \leq KN^j \left(\frac{|B_j|}{|B^*|} \right)^{\frac{1}{p} - \frac{1}{s}} + \left(\frac{|B_j|}{|B^*|} \right)^{\frac{1}{p}}\\
 & \leq  (KN^j + 1) \left(\frac{|B_j|}{|B^*|} \right)^{\frac{1}{p} - \frac{1}{s}} \leq  (K+1)N^{j} \left(\frac{|B_j|}{|B^*|} \right)^{\frac{1}{p} - \frac{1}{s}}.
\end{align*}
Now, set $\beta:= \sigma/q' = \sigma (1-p/s)$ and notice that the hypothesis $s > p \sigma'$ means $\beta > 1$. Raising the inequality above to the power $ps/(s-p) >0$ yields
\begin{equation}\label{Ej1beta}
 \left(\frac{|B_{j+1}|}{|B^*|} \right)^{\frac{1}{\beta}} \leq (K+1)^{\frac{p \sigma}{\beta}} N^\frac{j p \sigma}{\beta} \left(\frac{|B_j|}{|B^*|} \right).
\end{equation}
Set $P_j:= |B_j|^{\frac{1}{\beta^{j-1}}}$ so that raising \eqref{Ej1beta} to the power $1/\beta^{j-1}$ implies
$$
P_{j+1} \leq   (K+1)^{\frac{p \sigma}{\beta^j}} N^\frac{j p \sigma}{\beta^j} |B^*|^{\frac{1-\beta}{\beta^j}} P_j
$$
From conditions \eqref{suppPsi1inB} and \eqref{BnuRinPsij=1} we get $\lim\limits_{j \to \infty} P_j =1$ and $P_1 \leq |B|$ and the proof can now be completed just as in Section \ref{sec:proofThm}. \end{proof}

\begin{remark}\label{ASOLF} Notice that the family $\{\psi_j\}_{j \in \na}$ in \eqref{defpsijMetric} (with respect to Euclidean distance) will satisfy \eqref{suppPsi1inB} through \eqref{psijLips01} (with $\nu=1/2$) as well as \eqref{s-averageGradQ} with $\norm{Q}{L^\infty(B)}$ instead of $K$. The fact that the bound $K$ must be uniform in $B$ leads to the study of the interaction between the Euclidean distance and the distance $d$ whose balls define the inequality  \eqref{s-averageGradQ} and the Sobolev inequality \eqref{weakSubellipticSob}. Sufficient conditions on the interaction between Euclidean balls and $d$-balls for the structure $(\Omega, d, Q)$ to admit accumulating sequences of Lipschitz cut-off functions have been found in \cite[Proposition 68]{SW1} and \cite[Lemma 8]{KR}. The hypotheses on the validity of Sobolev inequalities, the existence of accumulating sequences of Lipschitz cut-off functions, and doubling property for Lebesgue measure on the $d$-balls, are typical in the related literature (see, for instance, \cite{MRW, SW1, SW2}). Theorem \ref{thm:Sob=>doublingSubelliptic} now renders the doubling condition redundant.
\end{remark}

\section*{Acknowledgements}

The authors would like to thank the anonymous referees whose suggestions helped improve the presentation of the article.


\begin{thebibliography}{99}


\bibitem{BGK} M. Barlow, A. Grigor'yan and T. Kumagai,\emph{On the equivalence of parabolic Harnack inequalities and heat kernel estimates}, J. Math. Soc. Japan, \textbf{64}(4) (2012) 1091--1146.

\bibitem{BM1} M. Biroli and U. Mosco, \emph{Formes de Dirichlet et estimations structurelles dans les milieux
discontinus}, C.R.A.S. Paris \textbf{313}, S\'er. I (1991), 593--598.

\bibitem{BM2} \bysame, \emph{A Saint-Venant type principle for Dirichlet forms on discontinuous
media}, Ann. Mat. Pura Appl.  \textbf{169}(4), (1995), 125--181.

\bibitem{BM3} \bysame, \emph{Sobolev and isoperimetric inequalities for Dirichlet forms on homogeneous
spaces}, Atti Accad. Naz. Lincei Cl. Sci. Fis. Mat. Natur. Rend. Lincei (9) Mat. Appl. \textbf{6}
(1995), 37--44.

\bibitem{BB11} A. Bj\"orn and J. Bj\"orn, Nonlinear Potential Theory on Metric Spaces, EMS Tracts in Mathematics 17, European Mathematical Society, Zurich, 2011.

\bibitem{BM} A. Bj\"orn and N. Marola, \emph{Moser iteration for (quasi)minimizers on metric spaces}, Manuscripta Math., \textbf{121}, (2006), 339--366.

\bibitem{DJS13}
E.~Durand, J.~Jaramillo, and N.~Shanmugalingam, \emph{First order
  {P}oincar\'{e} inequalities in metric measure spaces}, Ann. Acad. Scie. Fenn.
  Math. \textbf{38} (2013), no.~2013, 287--308.

\bibitem{FKS82}
E.~B. Fabes, C.~E. Kenig, and R.~P. Serapioni, \emph{The local regularity of solutions of degenerate elliptic equations}, Comm. Partial Differential  Equations \textbf{7} (1982), no.~1, 77--116.

\bibitem{Fr1}
B.~Franchi, G.~Lu, and R.~L. Wheeden, \emph{A relationship between
  {P}oincar\'e-type inequalities and representation formulas in spaces of
  homogeneous type}, Internat. Math. Res. Notices (1996), no.~1, 1--14.

\bibitem{Fr2}
B.~Franchi, C.~P\'{e}rez, and R.~L. Wheeden, \emph{Self-improving properties of John-Nirenberg and Poincar\'{e} inequalities on spaces of homogeneous
  type}, J. Funct. Anal. \textbf{153} (1998), 108--146.

\bibitem{FOT} M. Fukushima, Y. Oshima, and M. Takeda, Dirichlet Forms and Symmetric Markov Processes. Second revised and extended edition.
De Gruyter Studies in Math. 19, Berlin/New York, 2011.

\bibitem{Gr}
A.~A. Grigor'yan, \emph{The heat equation on noncompact {R}iemannian
  manifolds}, Mat. Sb. \textbf{182} (1991), no.~1, 55--87.

\bibitem{Haj}
P.~Haj{\l}asz, \emph{{S}obolev spaces on arbitrary metric space}, Potential
  Anal. \textbf{5} (1996), 403--415.

\bibitem{Haj2}
P.~Haj{\l}asz and P.~Koskela, \emph{{S}obolev met {P}oincar\'e}, Mem. Amer.
  Math. Soc. \textbf{145} (2000), no.~688, x+101.

\bibitem{HKM}
J.~Heinonen, T.~Kilpel{\"a}inen, and O.~Martio, Nonlinear potential
  theory of degenerate elliptic equations, Dover Publications Inc., Mineola,
  NY, 2006, Unabridged republication of the 1993 original.

\bibitem{HK95}
J.~Heinonen and P.~Koskela, \emph{Weighted {S}obolev and {P}oincar\'e
  inequalities and quasiregular mappings of polynomial type}, Math. Scand.
  \textbf{77} (1995), no.~2, 251--271.

\bibitem{Ki} J. Kigami, \emph{Volume doubling measures and heat kernel estimates on self-similar sets}, Memoirs Amer. Math. Soc., vol. 199, Number 932, 2009.

\bibitem{KSW}J. Kigami, R. Strichartz, K. Walker, \emph{Constructing a Laplacian on the diamond fractal}, Experiment. Math. \textbf{10}(3), (2001), 321--480.

\bibitem{KS} J. Kinnunen and N. Shanmugalingam, \emph{Regularity of quasi-minimizers on metric spaces}, Manuscripta Math. \textbf{105} (2001), 401--432.

\bibitem{KR} L.~Korobenko and C~Rios, \emph{Regularity of solutions to degenerate non-doubling second order equations}, preprint available at arXiv:1401.8047v1 [math.AP].

\bibitem{KST} P. Koskela, N. Shanmugalingam, and J. Tyson, \emph{Dirichlet forms, Poincar\'e inequalities, and the Sobolev spaces of Korevaar and Schoen}, Potential Anal. \textbf{21}, (2004), 241--262.

\bibitem{Ku} T. Kumagai, \emph{Heat kernel estimates and parabolic Harnack inequalities on graphs and resistance sorms}, Publ. RIMS, Kyoto Univ.
\textbf{40}, (2004), 793--818.

\bibitem{Kuw} K. Kuwae, \emph{Functional calculus for Dirichlet forms}, Osaka J. Math. \textbf{35}, (1998), 683--715.

\bibitem{KMS} K. Kuwae, Y. Machigashira, and T. Shioya, \emph{Sobolev spaces, Laplacian, and heat kernel on Alexandrov spaces}, Math. Z. \textbf{238}, (2001), 269--316.

\bibitem{MM} N. Marola and M. Masson, \emph{On the Harnack inequality for parabolic minimizers in metric measure spaces}, Tohoku Math. J.  (2),  \textbf{65} (4), (2013), 467--608.

\bibitem{MRW} D. Monticelli, S.~Rodney, and R. Wheeden, \emph{Boundedness of weak solutions of degenerate quasilinear equations with rough coefficients}, Differential Integral Equations \textbf{25} (2012), no.~1-2, 143--200.

\bibitem{SC1}
L.~Saloff-Coste, \emph{A note on {P}oincar\'e, {S}obolev, and {H}arnack
  inequalities}, Internat. Math. Res. Notices (1992), no.~2, 27--38.

\bibitem{SC2} \bysame, Aspects of Sobolev Type Inequalities. London Mathematical Society, Lecture Notes 289, Cambridge University Press, 2002.

\bibitem{SW1}
E.~T. Sawyer and R.~L. Wheeden, \emph{H\"older continuity of weak solutions to
  subelliptic equations with rough coefficients}, Mem. Amer. Math. Soc.
  \textbf{180} (2006), no.~847, x+157.

\bibitem{SW2}
\bysame, \emph{Degenerate {S}obolev spaces and regularity of subelliptic
  equations}, Trans. Amer. Math. Soc. \textbf{362} (2010), no.~04, 1869--1906.

\bibitem{Shan}
N.~Shanmugalingam, \emph{Newtonian spaces: An extension of {S}obolev spaces to
  metric measure spaces}, Rev. Mat. Iberoam. \textbf{16} (2000), no.~2,
  243--279.


\bibitem{St1} K. T. Sturm, \emph{Analysis on local Dirichlet spaces I. Recurrence, conservativeness and $L^p$-
Liouville properties}, J. Reine Angew. Math. \textbf{456} (1994), 173--196.


\bibitem{St2} \bysame, \emph{Analysis on local Dirichlet spaces. II. Upper Gaussian estimates for the fundamental solutions of parabolic equations}, Osaka J. Math., \textbf{32} (1995), 275--312.

\bibitem{St3} \bysame, \emph{Analysis on local Dirichlet spaces. III. The parabolic Harnack inequality},
J. Math. Pures Appl. \textbf{75}(9), (1996), 273--297.

\bibitem{St4} \bysame, \emph{On the geometry defined by Dirichlet forms}, Seminar on Stochastic Analysis, Random Fields and Applications (Ascona, 1993), Progr. Probab., Birkh\"auser, Basel, \textbf{36} (1995), 231--242.

\end{thebibliography}
\end{document}